\newtheorem{proposition}{Proposition}
\newtheorem{definition}{Definition}
\newtheorem{theorem}{Theorem}
\newtheorem{lemma}{Lemma}
\title{%
The Arithmetic Cosine Transform: Exact and Approximate Algorithms
}
\author{%
R.~J.~Cintra%
\thanks{%
R.~J.~Cintra is with 
the Signal Processing Group,
Departamento de Estat\'{\i}stica, 
Universidade Federal de Pernambuco.
This work was done while visiting 
the Department of Electrical and Computer Engineering, University of Calgary, Calgary, AB, Canada.
E-mail: 
\protect\url{rjdsc@stat.ufpe.org},
}
\and
V. S. Dimitrov%
\thanks{V. S. Dimitrov is with the
Advanced Technology Information Processing Systems (ATIPS) Laboratory, 
University of Calgary, Calgary, AB, Canada.
This work was done
during second author's sabbatical leave at Nanyang Technological University,
Singapore.
Email: \url{vdvsd103@gmail.com}}
}
\date{}
\newcommand{\myabstract}{%
In this paper,
we introduce 
a new class of transform method --- the arithmetic cosine transform (ACT).
We provide the central mathematical properties of the ACT, 
necessary in designing efficient and accurate implementations of the new transform method.
The key mathematical tools used in the paper come from analytic
number theory, in particular the properties of the Riemann zeta function.
Additionally,
we demonstrate that an exact signal interpolation is achievable for any
block-length.
Approximate calculations were also considered.
The numerical
examples provided show the potential of the ACT for various digital
signal processing applications.
}
\newcommand{\mykeywords}{%
Discrete cosine transform,
arithmetic transform algorithms,
nonuniform sampling

}
\begin{document}

\makeatletter
\if@twocolumn

\twocolumn[%
  \maketitle
  \begin{onecolabstract}
    \myabstract
  \end{onecolabstract}
  \begin{center}
    \small
    \textbf{Keywords}
    \linebreak
    \medskip
    \mykeywords
  \end{center}
  \bigskip
]
\saythanks

\else

  \maketitle
  \begin{abstract}
    \myabstract
  \end{abstract}
  \begin{center}
    \small
    \textbf{Keywords}
    \linebreak
    \medskip
    \mykeywords
  \end{center}
  \bigskip
  \onehalfspacing
\fi

\section{Introduction}

The arithmetic Fourier transform (AFT) has been emerged in 1988 as
a signal processing tool~\cite{tufts1988arithmetic}.
In a broad sense,
the AFT is a number-theoretic algorithm for spectrum evaluation.
One of the main features of the AFT is
its virtually multiplication-free nature.
Although initial versions of the AFT procedure
could only compute the real part of DFT~\cite{tufts1988arithmetic,tufts1989image},
additional improvements due to Reed \emph{et~al.}
expanded the AFT methodology and
the DFT could be fully evaluated~\cite{reed1992vlsi,reed1990mobius}.

Arithmetic methods lack a larger degree of adoption mainly
because data is required to be sampled nonuniformly.
In fact, the AFT algorithm imposes a sampling scheme
that can be related to Farey fractions~\cite{tepedelenlioglu1989note}.
This can be a sensitive issue since
incoming signals are usually uniformly sampled.
Therefore,
in order to obtain the necessary nonuniform samples,
literature essentially suggests two methods:
(i) signal oversampling
or
(ii) sample interpolation.
On the one hand,
oversampling can be a major drawback
because it requires an above Nyquist rate sampling.
Thus, this is often discarded as an option~\cite{reed1992vlsi}.
On the other hand, 
to obtain nonuniform samples from uniformly sampled data,
interpolation methods have been considered.
Archived literature lacks an exact procedure to interpolate
nonuniform samples obtained from arithmetic transform methods. 
Therefore,
in a simplistic way,
zero-order approximations and linear interpolation methods
have been considered~\cite{kelley1993efficient}.
Although not furnishing exact computations,
these crude interpolation methods could attain
acceptable approximations,
when large block-lengths were considered~\cite{reed1990mobius}.
However,
for small block-lengths,
the implied interpolation errors could be large enough
to totally preclude a meaningful computation.

Nevertheless,
recent existing works have applied the AFT algorithm in a number of ways.
In~\cite{lima2004dtmf}, 
the AFT is considered as an alternative to
the Goertzel algorithm for the single spectral component evaluation.
The AFT could also provide frequency domain testing units for
built-in self-test routines with reduced hardware overhead~\cite{venkateshwaran2004testing}.
Additionally, hardware considerations have been directed to enhance
the associated nonuniform sampling of the AFT~\cite{ray2007conversion}.
Finally,
the AFT has been considered as a tool for DCT evaluation~\cite{tan2007cmos}.

In all above mentioned applications,
the AFT procedure as described in~\cite{tufts1988arithmetic,reed1992vlsi}
was considered.
In particular,
even when the DCT was required as in~\cite{tan2007cmos},
it was obtained by means of the AFT~\cite{tufts1989image}.
Indeed, 
the DFT spectrum
can be mapped into
the DCT spectrum at the cost of extra computations.

The goal of this paper is twofold.
First, 
we aim to introduce a class of purely arithmetic algorithms
for the DCT,
termed arithmetic cosine transforms (ACT).
Such new methods are the main contribution of this work.
To accomplish this objective,
we examine arithmetic averages based on
the existing arithmetic Fourier transform.
Then,
we advance new arithmetic averages specially
tailored for the DCT computation.

Second,
an exact interpolation method
for the ACT is sought.
Such exact interpolation could provide solid mathematical grounds 
for arithmetic transform methods,
constituting a theoretical advance in the area.
In fact, any arithmetic transform method could only furnish an exact computation
if a precise interpolation method could be devised.
In particular,
we discuss an interpolation scheme
that could provide an exact spectrum evaluation,
even when short block-lengths are considered.
Moreover,
an asymptotic analysis for
the proposed interpolation approach is
elaborated
and
a heuristic approximation algorithm is devised.

The article unfolds as follows.
In Section~\ref{section.act},
we examine some of the existing tools employed
in the AFT theory
and
propose a new algorithm for the DCT.
Section~\ref{section.interpolation} is devoted to the 
interpolation issues that
arise from the introduced ACT.
In Section~\ref{section.matrix},
the discussed arithmetic transform is
formatted in terms of matrix representation.
Conclusions and final remarks
are given in Section~\ref{section.conclusions}.

\section{Arithmetic cosine transform}
\label{section.act}

\subsection{Preliminary concepts}

Among the several types of DCTs,
we separated the DCT-II,
which can be regarded as the most employed form~\cite{britanak2007cosine}.
This transformation relates two $N$-dimensional vectors,
$\mathbf{v}$ and $\mathbf{V}$,
according to 
the following pair of relations~\cite{britanak2007cosine}
\begin{align}
\label{eq.definition.dct}
V_k
&=
\sqrt{\frac{2}{N}}
\alpha_k
\sum_{n=0}^{N-1}
v_n
\cos
\left(
\frac{\pi k (n + 1/2)}{N}
\right),
\quad
k = 0, 1, \ldots, N-1,
\end{align}

\begin{align*}
v_n
&=
\sqrt{\frac{2}{N}}
\sum_{k=0}^{N-1}
\alpha_k
V_k
\cos
\left(
\frac{\pi k (n + 1/2)}{N}
\right),
\quad
n = 0, 1, \ldots, N-1,
\end{align*}
where the coefficients $\alpha_k$, $k=0,1,\ldots,N-1$
are given by
\begin{align*}
\alpha_k 
=
\begin{cases}
1/\sqrt{2}, & \text{if $k=0$}, \\
1, & \text{otherwise.} 
\end{cases}
\end{align*}
Hereafter,
we refer to this transformation simply as DCT.

Before defining and examining the ACT,
some fundamental tools are necessary.
In what follows,
we introduce the generalized M\"obius inversion formula tailored
for finite series.
Additionally,
some useful identities for trigonometric sums are also presented.
We state these preambulary results below.

\begin{theorem}
[Generalized M\"obius Inversion Formula for Finite Series]
\label{theorem.generalized}
Let $\{f_n\}$ be a sequence (e.g., signal samples) such that
it is nonnull for 
$1\leq n \leq N$ and
null for $n> N$.
Admit another sequence $\{g_n\}$ defined
as
\begin{align*}
g_n
=
\sum_{k=1}^{\lfloor N /n \rfloor}
a_k
f_{kn},
\end{align*}
where $\{a_n\}$ is a sequence of scalar coefficients
and
$\lfloor \cdot \rfloor$ is the floor function.
Then,
\begin{align*}
f_n
=
\sum_{m=1}^{\lfloor N /n \rfloor}
b_m
g_{mn},
\end{align*}
where $\{ b_n \}$
is the Dirichlet inverse sequence of $\{a_n\}$,
given that it exists~\cite{bateman2004analytic}.
\end{theorem}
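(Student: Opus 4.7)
The plan is to verify the proposed formula by direct substitution, reorganize the resulting double sum as a Dirichlet convolution, and invoke the defining property of the inverse sequence $\{b_n\}$. Specifically, I would substitute the definition of $g_{mn}$ into $\sum_{m=1}^{\lfloor N/n\rfloor} b_m g_{mn}$ to obtain a double sum
\begin{align*}
\sum_{m=1}^{\lfloor N/n\rfloor} b_m g_{mn}
=
\sum_{m=1}^{\lfloor N/n\rfloor} b_m
\sum_{k=1}^{\lfloor N/(mn)\rfloor} a_k f_{kmn},
\end{align*}
where the indices run over all pairs $(m,k)$ of positive integers with $mkn\leq N$.

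Next, I would change the order of summation by introducing the new index $\ell = mk$. Since $f_{\ell n}$ vanishes for $\ell n > N$ (the hypothesis that $\{f_n\}$ is supported on $1\le n\le N$), the outer sum over $\ell$ naturally terminates at $\lfloor N/n\rfloor$, and the inner sum becomes a sum over divisors $m\mid\ell$. This gives the cleaner expression
\begin{align*}
\sum_{m=1}^{\lfloor N/n\rfloor} b_m g_{mn}
=
\sum_{\ell=1}^{\lfloor N/n\rfloor} f_{\ell n}
\sum_{m\mid \ell} b_m\, a_{\ell/m}
=
\sum_{\ell=1}^{\lfloor N/n\rfloor} f_{\ell n}\,(b\ast a)(\ell),
\end{align*}
where $\ast$ denotes Dirichlet convolution. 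By the hypothesis that $\{b_n\}$ is the Dirichlet inverse of $\{a_n\}$, one has $(b\ast a)(\ell)$ equal to $1$ when $\ell=1$ and $0$ otherwise, so every term with $\ell\ge 2$ collapses and only the $\ell=1$ contribution $f_n$ survives, which is the desired identity.

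The main obstacle I anticipate is the bookkeeping at the interchange-of-summation step, namely making rigorous that the truncated region $\{(m,k):mkn\le N\}$ reorganizes exactly into $\{(\ell,m):m\mid\ell,\ \ell\le\lfloor N/n\rfloor\}$. The support hypothesis on $\{f_n\}$ is precisely what lets one extend or cut the inner $a_k$ sums without changing the result; once this is handled carefully, the remainder of the argument is purely formal. A minor point worth stating explicitly is that the existence of the Dirichlet inverse $\{b_n\}$ is equivalent to $a_1\ne 0$, a condition that should be flagged (as the cited source \cite{bateman2004analytic} provides) so that the invocation of $(b\ast a)(\ell)=[\ell=1]$ is justified.
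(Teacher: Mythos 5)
Your proposal is correct, and it is essentially the argument the paper itself points to: the paper's ``proof'' is only a citation to Reed \emph{et al.}'s Theorem~3 and Goldberg's inversion formula, whose underlying mechanism is exactly your substitute--reindex--convolve computation, with the truncated index region $\{(m,k):mkn\le N\}$ reorganized over $\ell=mk$ and the identity $(b\ast a)(\ell)=[\ell=1]$ collapsing the sum to $f_n$. You have in effect supplied the details the paper omits, including the correct flag that $a_1\neq 0$ is what guarantees the Dirichlet inverse exists.
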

\begin{proof}
It follows by
applying the arguments
described in the proof of Theorem 3 from~\cite[p.~469]{reed1990mobius}
into
the M\"obius inversion formula as shown in~\cite[p.~556]{goldberg1956inversion}.
\end{proof}
If 
we consider the unitary sequence
$a_n =1$, for $n =1, 2, 3, \ldots$,
then
the associated Dirichlet inverse sequence
is the M\"obius sequence
$b_n = \mu(n)$, for $n=1,2,3,\ldots$
The M\"obius function $\mu(n)$~\cite{apostol1984analytic}
is defined over the positive integers and
is given by
\begin{align*}
\mu(n)
&=
\begin{cases}
1, & \text{if $n=1$,} \\
(-1)^q, & \text{if $n$ can be factorized into $q$ distinct primes,} \\
0, & \text{if $n$ is divisible by a square number.}
\end{cases}
\end{align*}
In this particular case,
arithmetic transform literature simply refers to 
the above theorem
as
the M\"obius inversion formula for finite series~\cite{reed1990mobius,reed1992vlsi}.

The following lemmas are pivotal for subsequent developments.
They link usual trigonometric functions to number theoretic behavior functions,
a connection that is not always trivial~\cite{pei2007ramanujan,laohakosol2006ramanujan}.

\begin{lemma}[Reed's Lemma]
\label{lemma.sum}
Let $k>0$ and $k'\geq0$ be positive integers.
Then,
\begin{align*}
\sum_{m=0}^{k-1}
\cos
\left(
2m \frac{k'}{k} \pi
\right)
=
\begin{cases}
k, & \text{if $k|k'$}, \\
0,  & \text{otherwise,}
\end{cases}
\end{align*}
and
\begin{align*}
\sum_{m=0}^{k-1}
\sin
\left(
2m \frac{k'}{k} \pi
\right)
=
0
.
\end{align*}
\end{lemma}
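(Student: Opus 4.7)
The plan is to combine the two real trigonometric sums into a single complex geometric series via Euler's formula, then evaluate it in closed form and read off real and imaginary parts.

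First I would set $z = e^{2\pi i k'/k}$ and write
$$
S \;=\; \sum_{m=0}^{k-1} e^{2\pi i m k'/k} \;=\; \sum_{m=0}^{k-1} z^m,
$$
so that $\operatorname{Re} S$ is the first sum in the lemma and $\operatorname{Im} S$ is the second. This reduces everything to computing one geometric sum.

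Next I would split into two cases according to whether $k \mid k'$. If $k \mid k'$, then $k'/k$ is an integer, $z=1$, and $S$ collapses to $k$; taking real and imaginary parts immediately yields the cosine sum $k$ and sine sum $0$. If $k \nmid k'$, then the fractional part of $k'/k$ lies strictly between $0$ and $1$, so $z \neq 1$ and the standard closed form
$$
S \;=\; \frac{1 - z^k}{1 - z} \;=\; \frac{1 - e^{2\pi i k'}}{1-z}
$$
applies. Since $k'$ is an integer, $e^{2\pi i k'}=1$, hence $S=0$ and both the cosine and sine sums vanish. Note that the sine claim requires no separate argument: in the divisibility case $S$ is real, while in the non-divisibility case $S$ is zero.

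There is no substantive obstacle here; the only point demanding care is verifying that $z \neq 1$ precisely when $k \nmid k'$, which is what justifies dividing by $1-z$ in the geometric-series step. The rest is a routine roots-of-unity computation.
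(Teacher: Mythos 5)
Your proof is correct: the reduction to the geometric sum $\sum_{m=0}^{k-1} z^m$ with $z = e^{2\pi i k'/k}$, the observation that $z=1$ exactly when $k \mid k'$, and the evaluation $(1-z^k)/(1-z) = 0$ otherwise (since $z^k = e^{2\pi i k'} = 1$) together establish both the cosine and sine identities at once. The paper itself states this result without proof, attributing it to Reed and the AFT literature; the roots-of-unity argument you give is the standard one used there, so there is nothing to flag.
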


\noindent
We amplified Lemma~\ref{lemma.sum} with the next two results.

\begin{lemma}
\label{lemma.2}
Let $k>0$ and $k'\geq0$ be positive integers.
Then,
\begin{align*}
\sum_{m=0}^{2k-1}
\cos
\left(
\pi m \frac{k'}{k}
\right)
=
\begin{cases}
2k, & \text{if $2k|k'$}, \\
0, & \text{otherwise.}
\end{cases}
\end{align*}
\end{lemma}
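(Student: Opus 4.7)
The plan is to reduce Lemma~\ref{lemma.2} to a direct application of Reed's Lemma (Lemma~\ref{lemma.sum}) via a simple change of variables in the argument of the cosine.

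First, I would rewrite the argument $\pi m k'/k$ in the form required by Lemma~\ref{lemma.sum}, namely
\begin{align*}
\pi m \frac{k'}{k}
=
2\pi m \frac{k'}{2k}.
\end{align*}
Thus the sum to be evaluated becomes
\begin{align*}
\sum_{m=0}^{2k-1}
\cos\left(\pi m \frac{k'}{k}\right)
=
\sum_{m=0}^{(2k)-1}
\cos\left(2\pi m \frac{k'}{2k}\right),
\end{align*}
which is exactly the left-hand side of the cosine identity in Reed's Lemma with $k$ replaced by $2k$ (and with $k'$ unchanged, noting that $k'\geq 0$ is an integer and $2k > 0$ remains a positive integer).

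Applying Reed's Lemma in this substituted form, the sum equals $2k$ when $2k \mid k'$ and $0$ otherwise, which is precisely the statement of Lemma~\ref{lemma.2}. There is essentially no obstacle here: the only thing to check is that the hypotheses of Lemma~\ref{lemma.sum} are preserved under the substitution $k \mapsto 2k$, which holds trivially since $2k$ remains a positive integer. One might alternatively give a stand-alone derivation by writing the sum as the real part of a geometric series $\sum_{m=0}^{2k-1} e^{i\pi m k'/k}$ with ratio $e^{i\pi k'/k}$, distinguishing the case when this ratio equals $1$ (equivalently $2k \mid k'$) from the case when it does not, but invoking Lemma~\ref{lemma.sum} is the shortest route and is the approach I would use.
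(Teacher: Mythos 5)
Your proof is correct: the substitution $\pi m k'/k = 2\pi m\,k'/(2k)$ reduces the sum exactly to the cosine identity of Lemma~\ref{lemma.sum} with $k$ replaced by the positive integer $2k$, which immediately yields the claimed dichotomy. The paper gives no explicit proof of Lemma~\ref{lemma.2}, presenting it only as an ``amplification'' of Reed's Lemma, and your argument is precisely the intended one-line reduction.
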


\begin{proposition}
\label{prop.sum}
Let $k>0$ and $k'\geq0$ be integers and $\alpha$ be a real quantity.
Then,
\begin{align*}
\sum_{m=0}^{k-1}
\cos
\left(
2\pi \frac{k'}{k}(m+\alpha)
\right)
=
\begin{cases}
k, & \text{if $k'=0$,} \\
k \cos(2\pi \frac{k'}{k} \alpha), & \text{if $k | k'$, $k'\neq 0$,} \\
0, & \text{otherwise.}
\end{cases}
\end{align*}
\begin{proof}
It follows from usual trigonometric manipulations
and an application of Lemma~\ref{lemma.sum}.
\end{proof}
\end{proposition}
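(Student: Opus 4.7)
The plan is to reduce the claim to Reed's Lemma (Lemma~\ref{lemma.sum}) by factoring out the dependence on $\alpha$ via the cosine angle-addition identity. First I would write
\begin{align*}
\cos\!\left(2\pi \tfrac{k'}{k}(m+\alpha)\right)
=
\cos\!\left(2\pi \tfrac{k'}{k} m\right)\cos\!\left(2\pi \tfrac{k'}{k}\alpha\right)
-
\sin\!\left(2\pi \tfrac{k'}{k} m\right)\sin\!\left(2\pi \tfrac{k'}{k}\alpha\right),
\end{align*}
so that
\begin{align*}
\sum_{m=0}^{k-1}\cos\!\left(2\pi \tfrac{k'}{k}(m+\alpha)\right)
=
\cos\!\left(2\pi \tfrac{k'}{k}\alpha\right)\!\sum_{m=0}^{k-1}\cos\!\left(2\pi \tfrac{k'}{k} m\right)
-
\sin\!\left(2\pi \tfrac{k'}{k}\alpha\right)\!\sum_{m=0}^{k-1}\sin\!\left(2\pi \tfrac{k'}{k} m\right).
\end{align*}

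Second, I would invoke Lemma~\ref{lemma.sum}: the sine sum vanishes unconditionally, while the cosine sum equals $k$ when $k\mid k'$ and $0$ otherwise. Substituting, the right-hand side is $k\cos(2\pi \tfrac{k'}{k}\alpha)$ whenever $k\mid k'$, and $0$ in the complementary case.

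Finally, I would unify the stated piecewise form by observing that $k\mid 0$ holds trivially for any positive $k$, and that in this case $\cos(2\pi \tfrac{k'}{k}\alpha)=\cos 0 = 1$, so the first two cases of the proposition are actually a single case distinguished only for cosmetic emphasis on $k'=0$. This disposes of the proposition.

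I do not anticipate a real obstacle: the only point requiring care is ensuring that the $k'=0$ branch is not redundant or inconsistent with $k\mid k'$, and this is resolved by the observation above. No number-theoretic machinery beyond Lemma~\ref{lemma.sum} is needed.
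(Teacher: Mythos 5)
Your proof is correct and is precisely the argument the paper's one-line proof gestures at: the angle-addition expansion is the ``usual trigonometric manipulation,'' and Lemma~\ref{lemma.sum} handles both resulting sums, with the $k'=0$ case subsumed since $k\mid 0$ and $\cos 0=1$. No divergence from the paper's intended route.
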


\subsection{Arithmetic averages}

Based on existing definitions and concepts 
inherited from the AFT theory,
our initial attempt to define an
arithmetic transform procedure for the DCT
is detailed below.
Thus,
we consider the following definition
for AFT-like averages.

\begin{definition}[$k$th AFT-like average]
\label{def.bruns}
Let the $k$th average be defined as
\begin{align*}
S_k
\triangleq
\frac{1}{2k}
\sum_{m=0}^{2k-1}
v_{m \frac{N}{k}-\frac{1}{2}},
\quad
k = 1, 2, \ldots, N-1.
\end{align*}
\end{definition}

Except for the $1/2$-shift, 
which is naturally imposed by the DCT kernel,
the above concept was previously described in~\cite{fisher1989vlsi,reed1990mobius,reed1992vlsi}
as a framework for the AFT.

Let us investigate the consequences of this definition.
Relaxing the integer index constraint
of
the DCT formulae and
substituting the inverse DCT relation 
into the above described $k$th average,
we must have
\begin{align*}
S_k
&=
\frac{1}{2k}
\sum_{m=0}^{2k-1}
v_{m \frac{N}{k}-\frac{1}{2}}
\\
&=
\frac{1}{2k}
\sum_{m=0}^{2k-1}
\sqrt{\frac{2}{N}}
\sum_{k'=0}^{N-1}
\alpha_{k'}
V_{k'}
\cos
\left(
\frac{\pi k' (m \frac{N}{k} -\frac{1}{2} + \frac{1}{2})}{N}
\right)
\\
&=
\sqrt{\frac{2}{N}}
\frac{1}{2k}
\sum_{m=0}^{2k-1}
\left(
\gamma V_0
+
\sum_{k'=0}^{N-1}
V_{k'}
\cos
\left(
\pi  m \frac{k'}{k}
\right)
\right)
\\
&=
\sqrt{\frac{2}{N}}
\gamma V_0
+
\sqrt{\frac{2}{N}}
\frac{1}{2k}
\sum_{m=0}^{2k-1}
\sum_{k'=0}^{N-1}
V_{k'}
\cos
\left(
\pi  m \frac{k'}{k}
\right),
\quad
k = 1, 2, \ldots, N-1,
\end{align*}
where $\gamma = 1/\sqrt{2} - 1$.
By interchanging the summations, it holds that
\begin{align}
\label{eq.30}
S_k
&=
\sqrt{\frac{2}{N}}
\gamma V_0
+
\sqrt{\frac{2}{N}}
\frac{1}{2k}
\sum_{k'=0}^{N-1}
V_{k'}
\sum_{m=0}^{2k-1}
\cos
\left(
\pi  m \frac{k'}{k}
\right),
\quad
k = 1, 2, \ldots, N-1.
\end{align}
Invoking Lemma~\ref{lemma.2},
we have
\begin{align*}
S_k
&=
\sqrt{\frac{2}{N}}
\gamma V_0
+
\sqrt{\frac{2}{N}}
\frac{1}{2k}
\sum_{k'=0}^{N-1}
V_{k'}
\left\{
\begin{array}{cl}
2k, & \text{if $2k|k'$}, \\
0, & \text{otherwise,}
\end{array}
\right\}
\\
&=
\sqrt{\frac{2}{N}}
\gamma V_0
+
\sqrt{\frac{2}{N}}
\sum_{k'=0}^{N-1}
V_{k'}
\left\{
\begin{array}{cl}
1, & \text{if $2k|k'$}, \\
0, & \text{otherwise,}
\end{array}
\right\}
,
\quad
k = 1, 2, \ldots, N-1.
\end{align*}
Let $k'= 2sk$.
Then,
\begin{align*}
S_k
&=
\sqrt{\frac{2}{N}}
\gamma V_0
+
\sqrt{\frac{2}{N}}
\sum_{s=0}^{\left\lfloor\frac{N-1}{2k}\right\rfloor}
V_{2sk}
,
\quad
k = 1, 2, \ldots, N-1.
\end{align*}
Without any loss of generality,
let us assume that input signals have a null mean value.
Therefore, it follows that $V_0$ is zero.
This assumption
does not affect
the value of
the remaining components
of the DCT spectrum.
Thus, we find that
\begin{align*}
S_k
&=
\sqrt{\frac{2}{N}}
\sum_{s=1}^{\left\lfloor\frac{N-1}{2k}\right\rfloor}
V_{2sk}
=
\sqrt{\frac{2}{N}}
\sum_{s=1}^{\left\lfloor\frac{N-1}{2k}\right\rfloor}
\left\{
\begin{array}{cl}
0, & \text{if $s$ is odd,} \\
1, & \text{otherwise,}
\end{array}
\right\}
V_{sk}
,
\quad
k = 1, 2, \ldots, N-1.
\end{align*}
In order to invert
this last expression,
we must obtain the
Dirichlet inverse of the sequence
$\{0, 1, 0, 1, 0, 1, \ldots \}$,
as required by Theorem~\ref{theorem.generalized}.
However,
a sequence can only admit the Dirichlet inverse 
if and only if its first element is nonnull~\cite[p.~18]{bateman2004analytic}.
This is clearly not the case for the particular sequence under examination.

We conclude that the derivation of an arithmetic method for the DCT
requires more than simply reusing the AFT averages.
For the DCT computation,
specifically tailored averages should be considered.
In the next section,
such specific derivation is sought.
However,
the mathematical manipulations and arguments shown above
prove to be a useful framework.
We show next that the proposition of DCT specific averages
can be greatly benefited from the above exposition.

\subsection{ACT averages}

In order
to derive the ACT,
we adjusted the AFT averages as
suggested in the following definition.

\begin{definition}[$k$th ACT average]
\label{def.act.average}
Let the $k$th ACT average be defined as
\begin{align*}
S_k
\triangleq
\frac{1}{k}
\sum_{m=0}^{k-1}
v_{2(m+\beta) \frac{N}{k}-\frac{1}{2}},
\quad
k = 1, 2, \ldots, N-1,
\end{align*}
where $\beta$ is a fixed real number.
\end{definition}

Considering 
algebraic manipulations similar to the ones
that has led us from Definition~\ref{def.bruns} 
to equation~(\ref{eq.30})
and
admitting that the input signal has null mean,
\emph{mutatis mutandis},
we obtain
\begin{align*}
S_k
&=
\sqrt{\frac{2}{N}}
\frac{1}{k}
\sum_{k'=1}^{N-1}
V_{k'}
\sum_{m=0}^{k-1}
\cos
\left(
2\pi  (m+\beta) \frac{k'}{k}
\right),
\quad
k = 1, 2, \ldots, N-1.
\end{align*}
Thus,
invoking Proposition~\ref{prop.sum},
we have
\begin{align*}
S_k
&= 
\sqrt{\frac{2}{N}}
\sum_{k'=1}^{N-1}
V_{k'}
\left\{
\begin{array}{cl}
\cos(2\pi \frac{k'}{k} \beta), & \text{if $k | k'$,} \\
0, & \text{otherwise,}
\end{array}
\right\}
,
\quad
k = 1, 2, \ldots, N-1.
\end{align*}
Performing the substitution $k'=ks$,
it follows that
\begin{align*}
S_k
&=
\sqrt{\frac{2}{N}}
\sum_{s=1}^{\left\lfloor\frac{N-1}{k}\right\rfloor}
\cos(2\pi s \beta)
V_{sk}
,
\quad
k = 1, 2, \ldots, N-1.
\end{align*}
Observe that the above expression
is suitable for the application of
the generalized M\"obius inversion formula for finite series.
Under the notation of
Theorem~\ref{theorem.generalized},
we recognize
$a_n = \cos(2\pi n \beta)$,
$n = 1, 2, 3, \ldots$

Incidentally,
not always the
Dirichlet inverse of $\{a_n\}$
is well-defined.
Only when $a_1 \neq 0$,
the existence of the Dirichlet inverse
can be considered~\cite{bateman2004analytic}.
Thus,
we must impose
$\cos(2\pi\beta) \neq 0$
as a necessary condition for the derivation of the
ACT procedure.
This issue is precisely the point that
Definition~\ref{def.bruns} misses.

However,
finding the Dirichlet inverse of $\{a_n\}$,
say $\{ b_n \}$
for arbitrary values of $\beta$
can be a cumbersome maneuver.
Therefore,
we separated two particular useful cases:
(i) $\beta = 0$
and
(ii) $\beta = 1/2$.
Notice that $\beta = 1/4$ leads to a non-invertible
sequence, 
since this makes $a_1 = 0$.

For $\beta=0$,
we have the unitary sequence
$a_n =1$ and $b_n = \mu(n)$,
for $n =1, 2, 3, \ldots$
This is usually the
situation
addressed in standard AFT analysis.
On the other hand,
setting $\beta = 1/2$ yields
$a_n = (-1)^n$,
$n = 1, 2, 3, \ldots$
In this case,
the Dirichlet inverse is not immediately
recognized,
but is can be obtained analytically.
In the Appendix,
we derive the sought Dirichlet inverse,
which is given by
\begin{align}
\label{eq.b_n}
b_n
=
\begin{cases}
- \mu(n), & \text{if $n$ is odd,} \\
- 2^{m-1} \mu(2^{-m} n), & \text{if $n = 2^m s$, where $s$ is odd.}
\end{cases}
\end{align}
The first 32 terms of $\{b_n\}$ are listed below:
\begin{align*}
\begin{array}{rrrrrrrr}
          -1,&    -1,&     1,&    -2,&     1,&     1,&     1,&    -4, \\
           0,&     1,&     1,&     2,&     1,&     1,&    -1,&    -8, \\
           1,&     0,&     1,&     2,&    -1,&     1,&     1,&     4, \\
           0,&     1,&     0,&     2,&     1,&    -1,&     1,&   -16.
\end{array}
\end{align*}
In the context of digital signal processing
this is a potentially useful sequence,
since 
multiplying a given number by a power-of-two 
can be implemented by simple bit shift operations,
which possess a low computational complexity.

Regardless the choice of $\beta$,
to invert the following expression
given by
\begin{align*}
S_k
&=
\sqrt{\frac{2}{N}}
\sum_{s=1}^{\left\lfloor\frac{N-1}{k}\right\rfloor}
a_s
V_{sk}
,
\quad
k = 1, 2, \ldots, N-1,
\end{align*}
we may consider an auxiliary sequence given by $G_k = \sqrt{2/N} V_k$ for all $k$.
Thus,
a direct application of the generalized M\"obius inversion formula for finite series 
just tells us that
\begin{align*}
G_k &=
\sum_{l=1}^{\left\lfloor\frac{N-1}{k}\right\rfloor}
b_l S_{kl},
\quad
k = 1, 2, \ldots, N-1,
\end{align*}
where $\{b_n\}$
is the
Dirichlet inverse of $\{a_n\}$.
Finally, 
undoing the auxiliary substitution
we can write
\begin{align}
\label{eq.V.S}
V_k &=
\sqrt{\frac{N}{2}}
\sum_{l=1}^{\left\lfloor\frac{N-1}{k}\right\rfloor}
b_l
S_{kl},
\quad
k = 1, 2, \ldots, N-1.
\end{align}
Depending on whether 
$\{a_n \}$ is the unitary sequence 
or
the alternate sequence $-1, 1, -1, 1, \ldots$,
the inverse sequence $\{b_n\}$ is the M\"obius sequence or
the sequence described in~(\ref{eq.b_n}),
respectively.

Recognizing that unitary and M\"obius sequences
constitute a simpler pair of Dirichlet inverse sequences,
in the rest of this paper
we focus our attention to the case $\beta = 0$.
However,
all ensuing developments encompass
the proposed alternative formulation ($\beta = 1/2$)
without significant modifications.
Nevertheless,
the case $\beta = 1/2$ can furnish a link between
Definitions~\ref{def.bruns} and~\ref{def.act.average}.
Indeed,
for $\beta=1/2$, we have
\begin{align*}
S_k
&=
\frac{1}{k}
\sum_{m=0}^{k-1}
v_{2(m+\frac{1}{2}) \frac{N}{k}-\frac{1}{2}}
\\
&=
\frac{1}{k}
\sum_{m=0}^{k-1}
v_{(2m+1) \frac{N}{k}-\frac{1}{2}}
\\
&=
\frac{1}{k}
\mathop{\sum_{m=1}^{2k-1}}_{\text{$m$ odd}}
v_{m \frac{N}{k}-\frac{1}{2}},
\quad
k = 1, 2, \ldots, N-1.
\end{align*}
This last summation corresponds to the average shown in
Definition~\ref{def.bruns},
when only odd values of the dummy index are considered.
In other words,
if the samples required by Definition~\ref{def.bruns}
are submitted to a downsampling by a factor of 2,
then Definition~\ref{def.bruns} collapses to
Definition~\ref{def.act.average} for $\beta = 1/2$.
In a sense,
this observation establishes a connection between both formalisms.

As far as the computational complexity of the
M\"obius inversion formulae
are concerned, 
we can provide the following probabilistic reasoning.
The probability that a randomly chosen integer is not divisible
by a perfect square is $6/\pi^2 \approx 0.61$~\cite[p.~4]{bateman2004analytic}.
Therefore,
61\% of the values of the M\"obius function are zeros; 
meaning that
the computation of $G_k$ (or $V_k$) requires
$(1 - 6/\pi^2) \left\lfloor (N-1)/k\right\rfloor$
additions/subtractions on average.

Up to this point,
we assumed that input signals have null mean.
Now, let us remove this restriction.
Let $\bar{v} \triangleq \frac{1}{N}\sum_{n=0}^{N-1}v_n$
be the mean value of the input signal.
Thus, an arbitrary signal $\mathbf{v}'$
can be converted into a null mean signal
by simply subtracting the quantity $\bar{v}$.
Therefore,
the $k$th ACT averages $S_k'$
associated to $\mathbf{v}' - \bar{v}$
can be manipulated as follows:
\begin{align*}
S_k'
&\triangleq
\frac{1}{k}
\sum_{m=0}^{k-1}
(v_{2m \frac{N}{k}-\frac{1}{2}} - \bar{v})
\\
&
=
\frac{1}{k}
\left(
\sum_{m=0}^{k-1}
v_{2m \frac{N}{k}-\frac{1}{2}}
\right)
-\bar{v}
\\
&
=
S_k
-\bar{v}
,
\quad
k = 1, 2, \ldots, N-1.
\end{align*}
Consequently,
equation~(\ref{eq.V.S}) can be rearranged,
yielding
\begin{align*}
V_k 
&=
\sqrt{\frac{N}{2}}
\sum_{l=1}^{\left\lfloor\frac{N-1}{k}\right\rfloor}
\mu(l) S_{kl}' 
\\
&=
\sqrt{\frac{N}{2}}
\sum_{l=1}^{\left\lfloor\frac{N-1}{k}\right\rfloor}
\mu(l) 
(S_{kl} -\bar{v}) 
\\
&=
\sqrt{\frac{N}{2}}
\left(
\sum_{l=1}^{\left\lfloor\frac{N-1}{k}\right\rfloor}
\mu(l) 
S_{kl}
-
\sum_{l=1}^{\left\lfloor\frac{N-1}{k}\right\rfloor}
\mu(l) 
\bar{v} 
\right)
,
\quad
k = 1, 2, \ldots, N-1.
\end{align*}

Considering the Mertens function $M(n)$,
which is defined as 
$M(n) \triangleq \sum_ {m=1}^n \mu(m)$~\cite[p.~272]{schroeder2008number},
we obtain a more compact expression as
\begin{align}
\label{eq.with.Mertens}
V_k 
&=
\sqrt{\frac{N}{2}}
\left(
\sum_{l=1}^{\left\lfloor\frac{N-1}{k}\right\rfloor}
\mu(l) 
S_{kl}
\right)
-
\sqrt{\frac{N}{2}}
\bar{v} 
M\left(\left\lfloor\frac{N-1}{k}\right\rfloor\right)
,
\quad
k = 1, 2, \ldots, N-1.
\end{align}
The zeroth component $V_0$ is expressed separately by
$V_0 = \sqrt{N} \bar{v}$.

\section{ACT interpolation}
\label{section.interpolation}

\subsection{Fractional indices manipulation}

Now we must address the problem of handling 
fractional index samples.
Usually,
arithmetic transform literature addresses this issue
by prescribing the utilization of zero- or first-order interpolation methods.
We argue that such a naive approach is not the proper way of interpolating.

Again,
let us relax the integer index constraint
of the DCT formulae.
Considering a possibly noninteger quantity $r$,
the following construction
is derived given by
\begin{align*}
v_r
&=
\sqrt{\frac{2}{N}}
\sum_{k=0}^{N-1}
\alpha_k
V_k
\cos
\left(
\frac{\pi k (r + 1/2)}{N}
\right).
\end{align*}
Taking into account the direct transformation formula,
we obtain
\begin{align*}
v_r
&=
\sqrt{\frac{2}{N}}
\sum_{k=0}^{N-1}
\alpha_k
\left(
\sqrt{\frac{2}{N}}
\alpha_k
\sum_{n=0}^{N-1}
v_n
\cos
\left(
\frac{\pi k (n + 1/2)}{N}
\right)
\right)
\cos
\left(
\frac{\pi k (r + 1/2)}{N}
\right).
\end{align*}
Inverting the summation order yields
\begin{align*}
v_r
&=
\frac{2}{N}
\sum_{n=0}^{N-1}
v_n
\sum_{k=0}^{N-1}
\alpha_k^2
\cos
\left(
\frac{\pi k (n + 1/2)}{N}
\right)
\cos
\left(
\frac{\pi k (r + 1/2)}{N}
\right).
\end{align*}
Then, let us define the ACT weighting function as
\begin{align*}
w_n(r)
&\triangleq
\frac{2}{N}
\sum_{k=0}^{N-1}
\alpha_k^2
\cos
\left(
\frac{\pi k (n + 1/2)}{N}
\right)
\cos
\left(
\frac{\pi k (r + 1/2)}{N}
\right)
\\
&=
-
\frac{1}{N}
+
\frac{2}{N}
\sum_{k=0}^{N-1}
\cos
\left(
\frac{\pi k (n + 1/2)}{N}
\right)
\cos
\left(
\frac{\pi k (r + 1/2)}{N}
\right)
,
\quad
n=0,1,\ldots,N-1
.
\end{align*}
Thus, 
the samples associated to fractional indices 
can be obtained after
a linear combination of
the available uniformly obtained samples.
Hence,
\begin{align}
\label{eq.interpolation}
v_r
&=
\sum_{n=0}^{N-1}
w_n(r)
v_n
.
\end{align}
Notice also that if $r$ is integer, we have that
\begin{align*}
w_n(r) =
\begin{cases}
1, & \text{if $n = r$,} \\
0, & \text{otherwise.}
\end{cases}
\end{align*}
This fact stems from the orthogonality properties of the
transformation kernel.

We further investigate the suggested weighting function.
The next proposition states that the discussed weighting functions
are indeed inherently normalized.

\begin{proposition}
\label{prop.w.sums.1}
The ACT weighting function satisfies
the following summation formula
\begin{align*}
\sum_{n=0}^{N-1}
w_n(r)
=
1.
\end{align*}
\begin{proof}
Usual trigonometric manipulations furnish
the derivations below:
\begin{align*}
\sum_{n=0}^{N-1}
w_n(r)
&=
\sum_{n=0}^{N-1}
\left(
-
\frac{1}{N}
+
\frac{2}{N}
\sum_{k=0}^{N-1}
\cos
\left(
\frac{\pi k (n + 1/2)}{N}
\right)
\cos
\left(
\frac{\pi k (r + 1/2)}{N}
\right)
\right)
\\
&=
-1
+
\frac{2}{N}
\sum_{k=0}^{N-1}
\cos
\left(
\frac{\pi k (r + 1/2)}{N}
\right)
\sum_{n=0}^{N-1}
\cos
\left(
\frac{\pi k (n + 1/2)}{N}
\right)
.
\end{align*}
However,
for each $k$,
the inner summation can be expanded as follows:
\begin{align*}
\sum_{n=0}^{N-1}
\cos
\left(
\frac{\pi k (n + 1/2)}{N}
\right)
&=
\cos
\left(
\frac{\pi k}{2N}
\right)
\sum_{n=0}^{N-1}
\cos
\left(
\frac{2\pi (k/2) n}{N}
\right)
-
\sin
\left(
\frac{\pi k}{2N}
\right)
\sum_{n=0}^{N-1}
\sin
\left(
\frac{2\pi (k/2) n}{N}
\right)
.
\end{align*}
Since dummy index $k$ runs from $0$ to $N-1$,
we have that $N$ never divides $k/2$.
Therefore,
applying Lemma~\ref{lemma.sum},
we obtain that
\begin{align*}
\sum_{n=0}^{N-1}
\cos
\left(
\frac{\pi k (n + 1/2)}{N}
\right)
&=
\cos
\left(
\frac{\pi k}{2N}
\right)
\left\{
\begin{array}{cl}
N, & \text{if $k=0$}, \\
0, & \text{otherwise,}
\end{array}
\right\}
\\
&=
\left\{
\begin{array}{cl}
N, & \text{if $k=0$}, \\
0, & \text{otherwise.}
\end{array}
\right.
\end{align*}
Finally,
returning to the previous double summation,
we establish that
\begin{align*}
\sum_{n=0}^{N-1}
w_n(r)
&=
-1
+
\frac{2}{N}
\sum_{k=0}^{N-1}
\cos
\left(
\frac{\pi k (r + 1/2)}{N}
\right)
\left\{
\begin{array}{cl}
N, & \text{if $k=0$}, \\
0, & \text{otherwise,}
\end{array}
\right\}
\\
&=
1
.
\end{align*}
\end{proof}
\end{proposition}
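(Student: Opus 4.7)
The plan is to substitute the definition of $w_n(r)$ directly into $\sum_{n=0}^{N-1} w_n(r)$, swap the order of summation so the factor depending on $r$ is pulled outside, and reduce the resulting inner sum over $n$ to a Kronecker-type quantity that kills every mode except $k=0$.

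Concretely, first I would split the sum according to the two terms in the definition of $w_n(r)$. The constant piece $-1/N$ summed over $n=0,\ldots,N-1$ contributes exactly $-1$. For the double sum I would interchange summations to obtain
\begin{align*}
\frac{2}{N}\sum_{k=0}^{N-1}\cos\!\left(\frac{\pi k(r+1/2)}{N}\right)\,T_k,
\qquad
T_k \triangleq \sum_{n=0}^{N-1}\cos\!\left(\frac{\pi k(n+1/2)}{N}\right).
\end{align*}
Everything hinges on evaluating $T_k$ for $k=0,1,\ldots,N-1$.

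The key step is to show $T_0 = N$ and $T_k = 0$ for $1\le k\le N-1$. The case $k=0$ is immediate. For $k\neq 0$ my preferred route is to pass to complex exponentials, writing $T_k = \mathrm{Re}\bigl(e^{i\pi k/(2N)}\sum_{n=0}^{N-1}e^{i\pi kn/N}\bigr)$ and summing the geometric progression; after multiplying numerator and denominator by $e^{-i\pi k/(2N)}$ the expression becomes $\mathrm{Re}\bigl((1-(-1)^k)/(-2i\sin(\pi k/(2N)))\bigr)$, which vanishes for every $k\neq 0$ (the numerator is zero when $k$ is even, and the whole quotient is purely imaginary when $k$ is odd). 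An alternative I would try first is to split $\cos(\pi k(n+1/2)/N) = \cos(\pi k/(2N))\cos(\pi kn/N) - \sin(\pi k/(2N))\sin(\pi kn/N)$ and attempt to invoke Reed's Lemma, but this only works cleanly when $k$ is even; for odd $k$ the half-integer argument in $2\pi(k/2)n/N$ falls outside the hypothesis of Lemma~\ref{lemma.sum}, so the complex-exponential argument is safer.

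Once $T_k$ is pinned down, the double sum collapses to $(2/N)\cdot\cos(0)\cdot N = 2$, and combining with the $-1$ from the constant piece yields $\sum_{n=0}^{N-1}w_n(r) = 1$, as claimed. The only real obstacle I anticipate is precisely the subtlety just described: making sure the inner sum $T_k$ is handled uniformly in $k$ rather than silently extending Reed's Lemma to non-integer indices. Everything else is bookkeeping with trigonometric identities.
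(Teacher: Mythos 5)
Your proposal is correct, and its overall skeleton (pull out the constant $-1/N$ to get $-1$, interchange the sums, reduce everything to the inner sum $T_k=\sum_{n=0}^{N-1}\cos(\pi k(n+1/2)/N)$, and show $T_k=N\delta_{k,0}$) is exactly the paper's. Where you diverge is in the evaluation of $T_k$: the paper uses the angle-addition split $\cos(\pi k(n+1/2)/N)=\cos(\pi k/(2N))\cos(2\pi(k/2)n/N)-\sin(\pi k/(2N))\sin(2\pi(k/2)n/N)$ and then invokes Reed's Lemma with $k'=k/2$, asserting that the cosine sum vanishes for $k\neq 0$ and the sine sum vanishes always. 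You instead sum the geometric progression $\sum_n e^{i\pi kn/N}$ and read off
\begin{align*}
T_k=\mathrm{Re}\!\left(\frac{1-(-1)^k}{-2i\sin(\pi k/(2N))}\right),\qquad k\neq 0,
\end{align*}
which is zero because the numerator kills even $k$ and the quotient is purely imaginary for odd $k$. Your instinct about the subtlety is exactly right, and in fact sharper than the paper's own treatment: Reed's Lemma requires $k'$ to be an integer, so for odd $k$ its hypotheses fail, and the individual sums it is applied to are \emph{not} what the lemma would predict --- one checks that $\sum_{n=0}^{N-1}\cos(\pi kn/N)=1$ and $\sum_{n=0}^{N-1}\sin(\pi kn/N)=\cot(\pi k/(2N))$ for odd $k$, and only their weighted difference vanishes. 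The paper's two misapplications cancel and the conclusion $T_k=0$ survives, but your complex-exponential route handles all $k$ uniformly and is the more rigorous of the two. The final collapse to $-1+(2/N)\cdot 1\cdot N=1$ is the same in both.
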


Regardless of the considered block-length,
the use of the ACT interpolation
results in an exact calculation of the DCT spectrum.
Indeed,
no approximation was considered in any of our arguments.

\subsection{An example: the $8$-point ACT}

To illustrate the ACT structure and its interpolation scheme,
we devised an example.
Intentionally,
we selected the short block-length transformation
furnished by 
the $8$-point DCT.
This particular block-length is
widely adopted in several image and video coding standards,
such as
JPEG, MPEG-1, MPEG-2, H.261, and H.263~\cite{roma2007hybrid}.
The $8$-point DCT is also subject to an extensive analysis in~\cite{britanak2007cosine}.
In the following,
we set $\beta = 0$.

The first step of the ACT procedure consists of
the identification of the necessary interpolation points.
According to Definition~\ref{def.act.average},
these points are given
by
$2m \frac{8}{k}-\frac{1}{2}$
for $k = 1, 2, \ldots, 7$ 
and
$m=0,1,2, \ldots k-1$.
Therefore,
we find the following
fractional indices:
$r \in 
\left\{
-\frac{1}{2},
\frac{25}{14},
\frac{13}{6},
\frac{27}{10},
\frac{7}{2},
\frac{57}{14},
\frac{29}{6},
\frac{59}{10},
\frac{89}{14},
\frac{15}{2}
\right\}
$.
Fig{.}~\ref{figure.act8}
depicts a block diagram of the full algorithm.
The ACT interpolation block
calculates
the required samples $v_r$
according to 
the discussed interpolation procedure.

\begin{figure}
\centering
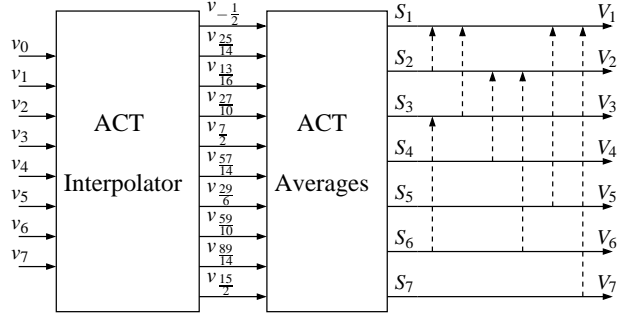
\caption{Block diagram of the $8$-point ACT. Dashed lines indicate multiplication by $-1$.}
\label{figure.act8}
\end{figure}

The fractional index samples are then
employed to obtain the ACT averages.
The block of ACT averages simply implements
the following set of equations:
\begin{align*}
S_1 &= v_{-\frac{1}{2}} \\
2S_2 &= S_1 + v_{\frac{15}{2}} \\
3S_3 &= S_1 + 2 v_{\frac{29}{6}} \\
4S_4 &= S_2 + 2 v_{\frac{7}{2}} \\
5S_5 &= S_1 + 2 v_{\frac{27}{10}} + 2v_{\frac{59}{10}} \\
6S_6 &= S_2 + S_3 + 2v_{\frac{13}{6}} \\
7S_7 &= S_1 + 2 v_{\frac{25}{14}} + 2v_{\frac{57}{14}} + 2v_{\frac{89}{14}}.
\end{align*}
Finally,
the ACT averages are combined
with respect to the M\"obius function (cf.~(\ref{eq.V.S})).
The resulting calculation involves no approximations and furnishes the exact DCT spectrum.

\subsection{Asymptotic analysis}

In this section,
we closely examine the weighting function required by
the interpolation procedure.
We aim to propose
simpler interpolation expressions
allowing an efficient computation of the ACT.

First,
we note that the ACT weighting function can be formulated in
closed form as detailed below.
In fact,
invoking elementary trigonometric identities,
we can establish the following relations:
\begin{align*}
w_n(r)
&=
-
\frac{1}{N}
+
\frac{2}{N}
\sum_{k=0}^{N-1}
\cos
\left(
\frac{\pi k (n + 1/2)}{N}
\right)
\cos
\left(
\frac{\pi k (r + 1/2)}{N}
\right)
\\
 &=
 -
 \frac{1}{N}
 +
 \frac{1}{N}
 \sum_{k=0}^{N-1}
 \left(
 \cos
 \left(
 \frac{\pi k (n+r+1)}{N}
 \right)
 +
 \cos
 \left(
 \frac{\pi k (n-r)}{N}
 \right)
 \right)
 \\
&=
-
\frac{1}{N}
+
\frac{1}{N}
\sum_{k=0}^{N-1}
\cos
\left(
\frac{\pi k (n+r+1)}{N}
\right)
+
\frac{1}{N}
\sum_{k=0}^{N-1}
\cos
\left(
\frac{\pi k (n-r)}{N}
\right),
\quad
n=0,1,\ldots,N-1
.
\end{align*}
The above trigonometric summations 
can be given in terms of the 
Dirichlet kernel~\cite[p.~312]{krantz2005real}.
Therefore,
it holds that
\begin{align*}
w_n(r)
&=
-\frac{1}{N}
+
\frac{1}{N}
\left(
\frac{1}{2}
+
\frac{1}{2}
D_{N-1}\left( \frac{\pi}{N} (n+r+1)\right)
\right)
+
\frac{1}{N}
\left(
\frac{1}{2}
+
\frac{1}{2}
D_{N-1}\left( \frac{\pi}{N} (n-r)\right)
\right)
\\
&=
\frac{1}{2N}
\left(
D_{N-1}\left( \frac{\pi}{N} (n+r+1)\right)
+
D_{N-1}\left( \frac{\pi}{N} (n-r)\right)
\right),
\end{align*}
where
$D_N(x) = \frac{\sin((N+1/2)x)}{\sin(x/2)}$
denotes the Dirichlet kernel.

The similarity between
the Dirichlet kernel
and the $\mathrm{sinc}$ function is apparent.
Indeed,
as $N\to\infty$,
these functions are interchangeable in several situations~\cite{bachman1999analysis}.
However,
even for small values of $N$
such an approximation is good~\cite[p.~180]{bachman1999analysis}. 
For instance, taking $N=8$ and
centered functions at the origin,
it follows that
the mean square error (MSE)
of
implied approximation is less than $2 \times 10^{-3}$
over the interval $[-N/2, N/2]$.
Thus,
the discussed weighting function assumes a
limiting form given by 
the sum of two translated sampling functions as
\begin{align*}
\lim_{N\to\infty}
w_n(r)
=
\mathrm{sinc}(n+r+1)
+
\mathrm{sinc}(n-r)
,
\end{align*}
where
$\mathrm{sinc}(x) \triangleq \frac{\sin(\pi x)}{\pi x}$.

Notice that this asymptotic expression connects 
the ACT interpolation to the $\mathrm{sinc}$ function interpolation.
Signal interpolation according to the $\mathrm{sinc}$ function
can be efficiently implemented in the time domain~\cite{schanze1995sinc,dooley2000notes}.
Additionally,
fractional delay FIR filtering methods 
offer another computational approach~\cite{valimaki2001fractional,laakso1996splitting}.

The
limiting form of $w_n(r)$
leads us
to draw some additional conclusions 
on its asymptotic behavior.
Let $[\cdot]$ denote the nearest integer function as implemented in C or Matlab programming languages
and
$r$ be a fractional interpolation point.
We observed that, 
when $0 \leq [r] \leq N-1$,
the asymptotic weighting function 
is
essentially governed by a single $\mathrm{sinc}$ function.
The role of $\mathrm{sinc}(n+r+1)$
could be
neglect,
since its argument
would be large enough.
Indeed,
this function
approaches to zero
according to $O(1/n)$.
Thus, in this case
we say that
\begin{align*}
\lim_{N\to\infty}
w_n(r)
\approx
\mathrm{sinc}(n-r)
.
\end{align*}

Fig{.}~\ref{fig.approx}
shows some plots of $\mathrm{sinc}(n-r)$ for several values of $N$.
These plots
intuitively indicate
the use of a linear approximation for the main lobe of
the $\mathrm{sinc}$
function.
Additionally, 
we assumed that the effect of the remaining lobes is negligible.
We also observed
that,
for $[r]\in \{-1, N \}$,
both $\mathrm{sinc}$ functions are relevant
since they overlap each other.
This last situation was treated separately.

\begin{figure}
\centering
\subfigure[]{\epsfig{file=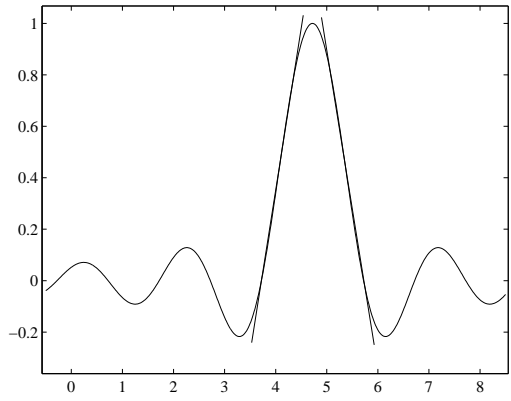,width=8cm}}
\subfigure[]{\epsfig{file=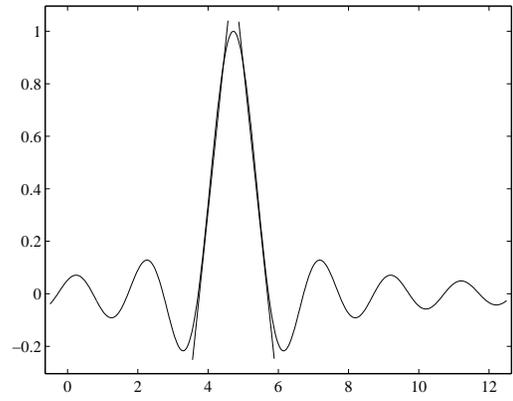,width=8cm}}
\subfigure[]{\epsfig{file=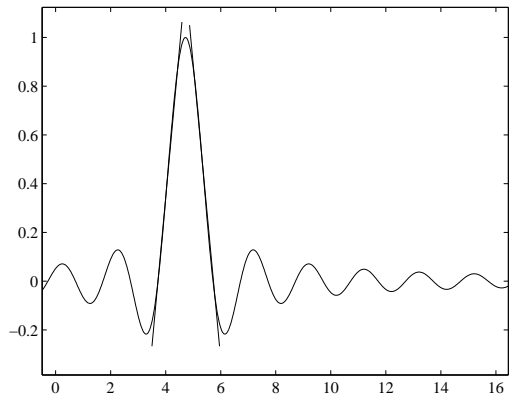,width=8cm}}
\caption{Linear approximation for the main lobe of $\mathrm{sinc}(n-r)$, $r = 4.72$, for $N=8, 12, 16$.}
\label{fig.approx}
\end{figure}

In terms of the above discussion,
we derived an empirical approximation procedure
that considers at most two uniform samples
to render an interpolated sample.
In Fig{.}~\ref{algorithm.heuristic},
algorithmic details of the suggested approximate interpolation are given.
In the proposed heuristic algorithm,
we admit an auxiliary quantity~$\Delta = r - [r]$,
an error tolerance~$\epsilon$,
and
a scaling factor~$\alpha\approx 1.2$, for $N=8$.
This last quantity was found according to 
a standard linear fitting procedure.

\begin{figure}
\hrulefill

\footnotesize
   \textbf{Input:} Fractional number $r$, tolerance $\epsilon$, block-length size $N$. \\
   \textbf{Output:} Approximate values of the ACT weighting function. \\
   \textbf{Method:} Heuristic approximation. \\ [-0.4cm]

\hrulefill

\begin{algorithmic}[]
\STATE $\alpha \leftarrow 1.2$, $\Delta \leftarrow r - [r]$, $\mathbf{w} \leftarrow \mathbf{0}$

\IF{$|\Delta|< \epsilon$}
  \STATE $w_{[r]} = 1$
  \RETURN
\ENDIF

\IF{$[r] \in [1,N-2]$}
  \STATE $w_{[r]-1} \leftarrow (|\Delta|-\Delta)/2$
  \STATE $w_{[r]} \leftarrow 1-|\Delta|$
  \STATE $w_{[r]+1} \leftarrow (|\Delta|+\Delta)/2$
\ELSIF{$[r] = 0$}
  \STATE $w_{[r]} \leftarrow 1-|\Delta|$
  \STATE $w_{[r]+1} \leftarrow (|\Delta|+\Delta)/2$
\ELSIF{$[r] = N-1$}
  \STATE $w_{[r]-1} \leftarrow (|\Delta|-\Delta)/2$
  \STATE $w_{[r]} \leftarrow 1-|\Delta|$
\ELSIF{$[r] = -1$}
  \STATE $w_0 \leftarrow 1$
  \STATE $w_1 \leftarrow -0.35$
\ELSIF{$[r] = N$}
  \STATE $w_{N-2} \leftarrow -0.35$
  \STATE $w_{N-1} \leftarrow 1$
\ENDIF

\RETURN $\alpha \cdot \mathbf{w}$
\end{algorithmic}
\hrulefill
\caption{Algorithm for computing approximate values of the weighting function $w_i(r)$, $i = 0, 1, \ldots, N-1$.}
\label{algorithm.heuristic}
\end{figure}

Considering a conservative choice of $\epsilon = 0.1$,
we employed the proposed heuristics
to calculate the approximate DCT spectra
of 256 
randomly generated signal vectors.
A short block-length $N=8$ was deliberately selected.
The elements of the input signal
were chosen
to be distributed according to
a standard uniform distribution.
The resulting
average
MSE due to the discussed approximation
was as low as $4.7\times10^{-3}$.
This figure is comparable to
the MSE associated
to some integer approximation algorithms for 
the $8$-point  DCT
(e.g., integer cosine transform, $C$-matrix transform)
as detailed in~\cite{britanak2007cosine}.

\section{Matrix-vector representation}
\label{section.matrix}

Further insight on the
nature of the ACT
can be obtained
when the previous constructions are represented
in matrix-vector form.
Let the input signal and its associated DCT spectrum
be denoted by column vectors
$
\mathbf{v}=
\begin{bmatrix}
v_0, & v_1, & \cdots, & v_{N-1}
\end{bmatrix}^T
$
and
$
\mathbf{V}=
\begin{bmatrix}
V_0, & V_1, & \cdots, & V_{N-1}
\end{bmatrix}^T
$
,
respectively.
Additionally,
consider the DCT matrix
$\mathbf{C}$,
whose elements are defined 
according to~(\ref{eq.definition.dct}):
$[\mathbf{C}]_{k,n} = \sqrt{2/N}\alpha_k \cos(\pi k (n+1/2)/N)$,
for $k,n = 0, 1, \ldots, N-1$.
The above quantities are related by $\mathbf{V} = \mathbf{C} \cdot \mathbf{v}$
and $\mathbf{C}^{-1}  = \mathbf{C}^T$~\cite[p.~41]{britanak2007cosine}.

In order
to render the ACT structure in matrix-vector form,
we need to introduce some special matrices.
\begin{definition}[M\"obius matrix]
The $(i,j)$ element of the $N$-order M\"obius matrix $\mathbf{M}_N$
is given by 
$\mu(j/i)$, 
whenever $i$ divides $j$,
for $i,j=1,2,\ldots,N$.
Otherwise, it is zero.
\end{definition}
By construction,
the M\"obius matrix is upper triangular with unity diagonal elements.
Thus, it is always non-singular and its determinant is unity for any dimension.
The inverse of the M\"obius matrix can be directly obtained 
without  calling an inversion procedure.
The $(i,j)$ element of $\mathbf{M}^{-1}$ is 1, if $i$ divides $j$;
otherwise, it is zero.
This fact stems from the M\"obius inversion formula relations.

Additionally,
we consider the extended M\"obius matrix defined by
\begin{align*}
\mathbf{M}'
\triangleq
\mathrm{diag}
(1,\mathbf{M}_{N-1})
=
\begin{bmatrix}
1 & 0 & \cdots & 0 \\
0 &   &        &   \\
\vdots & & \mathbf{M}_{N-1} & \\
0 &   &        &   
\end{bmatrix}
,
\end{align*}
where the operator $\mathrm{diag}(\cdot)$ returns a diagonal matrix. 
We may also admit a vector of averages expressed by
$
\mathbf{S}
=
\begin{bmatrix}
S_0, & S_1, & \cdots, & S_{N-1}
\end{bmatrix}^T
$,
where 
the zeroth average is defined
separately
as
$
S_0
\triangleq
\frac{\sqrt{2}}{N}
\sum_{i=0}^{N-1}
v_i
$,
and
the remaining values
are the $k$th ACT averages
for $k=1,2,\ldots,N-1$.
Notice that DC component of the spectrum 
is related to the zeroth average
$V_0 = \sqrt{N/2}S_0$.

The ACT framework 
can provide a new insight into the DCT spectrum.
In fact,
equation~(\ref{eq.with.Mertens})
indicates that the DCT spectrum
can be separated into two parts:
(i) one due to the M\"obius combination of the ACT averages
and
(ii) another due to the Mertens function.
Let these two parts be termed 
the M\"obius and 
the Mertens parts of the DCT spectrum
denoted by $\mathbf{V_1}$ and $\mathbf{V_2}$,
respectively.
Thus, 
the DCT spectrum can be decomposed as
$\mathbf{V} = \mathbf{V_1} + \mathbf{V_2}$.

Joining the above structures
it follows that
the vector $\mathbf{S}$
is related to the M\"obius part of the DCT spectrum
via the M\"obius extended matrix
according to
\begin{align}
\label{eq.V=M'S}
\mathbf{V_1}
=
\sqrt{\frac{N}{2}}
\cdot
\mathbf{M}'
\cdot
\mathbf{S}
.
\end{align}

In the light of the proposed ACT interpolation,
we can recast the ACT averages 
in terms of the weighting function.
Thus, invoking~(\ref{eq.interpolation}),
we have
\begin{align*}
S_k
&=
\frac{1}{k}
\sum_{m=0}^{k-1}
v_{2m\frac{N}{k}-\frac{1}{2}}
\\
&=
\frac{1}{k}
\sum_{m=0}^{k-1}
\left(
\sum_{n=0}^{N-1}
v_n
w_n\left( 2m\frac{N}{k}-\frac{1}{2} \right)
\right)
,
\quad
k = 1, \ldots, N-1
.
\end{align*}
Inverting the order of the summations,
we may also obtain
\begin{align*}
S_k
&=
\sum_{n=0}^{N-1}
\left(
\frac{1}{k}
\sum_{m=0}^{k-1}
w_n\left( 2m\frac{N}{k}-\frac{1}{2} \right)
\right)
v_n
,
\quad
k = 1, \ldots, N-1
.
\end{align*}
For a fixed $n$,
the inner expression in parenthesis is 
an average of 
particular weighting values at different interpolating points.
This term depends only on $n$ and $k$
being independent of the input vector $\mathbf{v}$.
Then, 
we can separate this expression for better understanding.
Let us define the $k$th weighting average as
\begin{align*}
W_{k,n}
\triangleq
\frac{1}{k}
\sum_{m=0}^{k-1}
w_n\left( 2m\frac{N}{k}-\frac{1}{2} \right),
\quad
k = 1,\ldots, N-1,\ 
n = 0,\ldots, N-1.
\end{align*}
These averages give rise to
the construction of the following matrix:
\begin{align*}
\mathbf{W}
=
\left[
W_{k,n}
\right],
\quad
k = 1,\ldots, N-1,
n = 0,\ldots, N-1.
\end{align*}

Since the rows of $\mathbf{W}$ are
convex combinations of the weighting function,
Proposition~\ref{prop.w.sums.1}
indicates that 
the sum of elements across rows of $\mathbf{W}$ is equal to 1,
i.e.,
$\sum_{n=0}^{N-1} W_{k,n} = 1$,
for $k = 1,\ldots, N-1$.
By augmenting the matrix $\mathbf{W}$
with the inclusion of an extra row and padding,
as described below,
\begin{align*}
\mathbf{W}'
= 
\begin{bmatrix}
1/N & 1/N & \cdots & 1/N \\
0 &   &        &   \\
\vdots & & \mathbf{W} & \\
0 &   &        &   
\end{bmatrix}
,
\end{align*}
we may write the following relation:
\begin{align*}
\mathbf{S}
=
\boldsymbol{\alpha}
\cdot
\mathbf{W}'
\cdot
\mathbf{v}
,
\end{align*}
where
$\boldsymbol{\alpha} \triangleq \mathrm{diag}(\alpha_0, \alpha_1, \ldots, \alpha_{N-1})$.

An application of this last expression into~(\ref{eq.V=M'S})
furnishes the final matrix-vector form 
for the M\"obius part of the considered procedure.
Hence,
\begin{align*}
\mathbf{V_1}
=
\sqrt{\frac{N}{2}}
\cdot
\mathbf{M}'
\cdot
\boldsymbol{\alpha}
\cdot
\mathbf{W}'
\cdot
\mathbf{v}
.
\end{align*}
Effectively,
the transformation matrix that relates $\mathbf{V_1}$ and $\mathbf{v}$
is given by
\begin{align*}
\mathbf{C_1}
\triangleq
\sqrt{\frac{N}{2}}
\cdot
\mathbf{M}'
\cdot
\boldsymbol{\alpha}
\cdot
\mathbf{W}'
.
\end{align*}

Now considering the spectral part $\mathbf{V_2}$
due to the Mertens function in~(\ref{eq.with.Mertens}),
we advance the following additional matrix:
\begin{align*}
\mathbf{C_2}
&=
-
\sqrt{\frac{1}{2N}}
\cdot
\mathrm{diag}
\left(
0, 
M\left(\left\lfloor \frac{N-1}{1} \right\rfloor\right),
M\left(\left\lfloor \frac{N-1}{2} \right\rfloor\right),
\ldots,
M\left(\left\lfloor \frac{N-1}{N-1} \right\rfloor\right)
\right)
\cdot
\mathbf{1}_N
\cdot
\mathbf{1}_N^T
,
\end{align*}
where $\mathbf{1}_N$ is a column vector consisting of
unit elements.
Of course,
if the input signal has null mean,
then the Mertens part of the DCT spectrum 
is always zero:
$\mathbf{C_2} \cdot \mathbf{v} = \mathbf{0}$.

In view of the above,
we must have
\begin{align*}
\mathbf{V}
&=
\mathbf{V_1} + \mathbf{V_2}
\\
&=
\mathbf{C_1}\cdot\mathbf{v}
+
\mathbf{C_2}\cdot\mathbf{v}
\\
&=
(\mathbf{C_1}+\mathbf{C_2}) \cdot \mathbf{v}.
\end{align*}
This manipulation suggests
that the DCT matrix~$\mathbf{C}$
can be
decomposed as
$\mathbf{C}=\mathbf{C}_1+\mathbf{C}_2$.
Notice that
when the input signal has null mean,
a new formulation for the DCT matrix arises.
In this case,
we can establish that
\begin{align*}
\mathbf{C}
\cdot
\mathbf{v}
=
\mathbf{C}_1
\cdot
\mathbf{v}
.
\end{align*}
This alternative transformation matrix for the DCT 
can be considered
as a starting point to derive new algorithms
under the constraint of null mean input signals.

\section{Conclusion and Remarks}
\label{section.conclusions}

The main contributions of this paper
are 
(i) a new arithmetic transform
and
(ii) the elucidation of the arithmetic transform interpolation issues.
The introduced arithmetic cosine transform is a number-theoretic based algorithm
devoted to the DCT computation.
Therefore,
DSP applications that require a DCT evaluation
are potential candidates for the use of the ACT method.

Differently from the standard AFT analysis,
we generalized existing inversion formula,
allowing new frameworks for the arithmetic transform theory.
Besides the M\"obius sequence,
we identified alternative
adequate sequences for
the suggested method.

Moreover, 
the introduced interpolation method
allows the exact computation of the DCT spectrum,
even for small block-lengths.
This is particularly distinct from the existing AFT algorithms,
which 
(i) inevitably introduce approximation errors and 
(ii) tend to excel only when large block-lengths are considered.
Using arithmetic methods (e.g., AFT)
for small block-length transform evaluation
was a challenging issue, 
for which area literature could not furnish adequate solutions until now.

The complexity of the ACT procedure is 
mainly due to 
the interpolation step.
If the sampling process could be adjusted 
in order
to collect samples natively in 
an appropriate nonuniform fashion,
then 
the ACT computation would not need 
any sort of interpolation.
Indeed,
the DCT would be exactly
computed after 
some few
additions/subtractions
associated to the M\"obius function.
Other than that,
the arithmetic transform philosophy can concentrate the transform computational complexity
into the interpolation block.
This is a different perspective for the design of transform procedures.
For instance,
fast algorithms for fractional delay filtering~\cite{valimaki2001fractional}
now receive an additional motivation under the arithmetic cosine transform paradigm.

Finally,
the existence of efficient frameworks
for bidimensional AFTs
suggests a venue for extending
the proposed ACT into the bidimensional case~\cite{ge1997efficient,atlas1997bruns,tufts1988symbiosis,tan2007cmos}.
Without much effort,
the proposed method could be used as the fundamental building block for
a bidimensional arithmetic cosine transform.
On the other hand,
a dedicate analysis for the bidimensional case could prove to be more appropriate.
In particular,
an application Feig-Winograd direct approach could avoid row-column methods~\cite{feig1992fast}.
This could be a more attractive method for the bidimensional ACT.
In any case,
the characterization of the exact interpolation for bidimensional signals
is an open topic.

\section*{Acknowledgment}
\addcontentsline{toc}{section}{Acknowledgment}

The first author wishes to express his gratitude to
Prof.~H\'elio~M.~de Oliveira who provided several
stimulating scientific discussions.
This work was partially supported by the
Department of Foreign Affairs, Trade and Development, Canada.

{\small
\bibliographystyle{IEEEtran}
\bibliography{act}
}

\appendix

\section{The Dirichlet inverse of $\{ (-1)^n \}$}
\label{appendix.inverse.alternating}

In order to derive the Dirichlet inverse of the sequence
$a_n = (-1)^n$, for $n=1,2,3,\ldots$
,
let us examine its associated Dirichlet series.
A result from the theory of functions~\cite[p.~337]{hardy2008numbers}
states that
\begin{align*}
\sum_{n=1}^\infty
\frac{(-1)^n}{n^s}
=
-
(1-2^{1-s}) \zeta(s)
,
\quad
\Re(s)>0,
\end{align*}
where 
$\zeta(s) \triangleq \sum_{n=1}^\infty 1/n^s$
is the Riemann zeta function.
Therefore, we directly have that the closed form of Dirichlet series of $\{a_n\}$,
$A(s)$, is
\begin{align*}
A(s)
=
-
(1-2^{1-s})
\zeta(s)
.
\end{align*}
The Dirichlet inverse of $\{a_n\}$ is a sequence $\{b_n\}$
such that its Dirichlet series, $B(s)=\sum_{n=1}^\infty b_n/n^s$, is equal to $1/A(s)$.
Thus, 
we can maintain that
\begin{align*}
B(s)
&
=
-
\frac{1}{1-2^{1-s}}
\frac{1}{\zeta(s)}
=
-
\frac{1}{1-2^{1-s}}
\sum_{n=1}^\infty
\frac{\mu(n)}{n^s}
.
\end{align*}
Before finding $\{b_n\}$,
we must identify the Dirichlet series of
$1/(1-2^{1-s})$.
This is necessary
in order to 
put $B(s)$ as a product of two Dirichlet
series and apply the convolution theorem
for Dirichlet series.
Accordingly, 
we have that
\begin{align*}
\frac{1}{1-2^{1-s}}
&=
\sum_{k=0}^\infty
(2^{1-s})^k
=
\sum_{k=0}^\infty
\frac{2^k}{(2^k)^s}
.
\end{align*}
This final expression is already in the Dirichlet series format.
Therefore,
the sequence associated to $1/(1-2^{1-s})$
is simply
\begin{align*}
c_n =
\begin{cases}
n, & \text{if $n$ is a power of two,} \\
0, & \text{otherwise.}
\end{cases}
\end{align*}
Returning to the expression for $B(s)$,
we can write as
\begin{align*}
B(s)
&=
-
\sum_{n=1}^\infty
\frac{c_n}{n^s}
\sum_{n=1}^\infty
\frac{\mu(n)}{n^s}
=
-
\sum_{n=1}^\infty
\frac{(c \circledast \mu)(n)}{n^s},
\end{align*}
where $\circledast$ denotes the Dirichlet convolution.
By the equivalence property of Dirichlet series,
we conclude that $b_n = -(c \circledast \mu)(n)$.

Now let us evaluate $(c \circledast \mu)(n)$.
Observe that if $n$ is odd,
then the only divisor of $n$ which is a power of two is
the unit.
Therefore,
we obtain
\begin{align*}
(c \circledast \mu)(n)
&=
\sum_{d|n} c_d \mu(n/d) 
=
c_1 \mu(n) 
=
\mu(n)
.
\end{align*}
On the other hand,
admit that $n$ is even in the form $n = 2^m s$,
where $s$ is an odd integer and $m$ is a positive integer.
Considering that
(i) 
the Dirichlet convolution of two multiplicative functions
results in a multiplicative function~\cite[p.~35]{apostol1984analytic}
and
(ii)
the sequence $\{c_n\}$ is multiplicative with respect to $n$
(a fairly direct result),
we maintain that
\begin{align*}
(c \circledast \mu)(n)
&=
(c \circledast \mu)(2^m)
\cdot
(c \circledast \mu)(s)
=
(c \circledast \mu)(2^m)
\cdot
\mu(s)
.
\end{align*}
According to the definition of the Dirichlet convolution,
the expansion of
$(c \circledast \mu)(2^m)$
yields
\begin{align*}
(c \circledast \mu)(2^m)
&=
c_1 \mu(2^m)
+
c_2 \mu(2^{m-1})
+
\cdots
+
c_{2^{m-1}} \mu(2)
+
c_{2^m} \mu(1).
\end{align*}
Due to the M\"obius function,
only the last two terms are possibly nonnull.
Thus,
we have
\begin{align*}
(c \circledast \mu)(2^m)
&=
c_{2^{m-1}} \mu(2)
+
c_{2^m} \mu(1)
=
2^{m-1} (-1) + 2^m (1)
=
2^{m-1}
.
\end{align*}
Joining the above manipulations,
we have that
\begin{align*}
(c \circledast \mu)(n)
=
\begin{cases}
\mu(n), & \text{if $n$ is odd,} \\
2^{m-1} \mu(2^{-m} n), & \text{if $n = 2^m s$, where $s$ is odd.}
\end{cases}
\end{align*}

\end{document}